\documentclass[]{amsart}   % {{{
\usepackage{amssymb, accents}
\usepackage{tikz, float, tabu, enumitem, mathtools}
\usetikzlibrary{calc,decorations.pathreplacing,matrix,arrows,positioning}
\usepackage[breaklinks]{hyperref}

% mathbb and mathcal symbols

\newcommand{\ZZ}{\mathbb{Z}}

\newcommand{\V}{\mathcal{V}}
\newcommand{\A}{\mathbb{A}}
\newcommand{\m}[1]{\mathbb{#1}}    % for models
\newcommand{\cl}[1]{\mathcal{#1}}  % for classes

% theorems and similar environments
\theoremstyle{plain} \newtheorem{thm}{Theorem}[section]

\newtheorem{cor}[thm]{Corollary}

\theoremstyle{definition} 
\theoremstyle{remark} 
\theoremstyle{plain} \newtheorem*{claim}{Claim}

\newenvironment{claimproof} {
    \begin{proof}[Proof of claim]
    
  } {
    \end{proof}
  }

% custom commands

\DeclareMathOperator{\Sg}{\text{Sg}}
\DeclareMathOperator{\Con}{\text{Con}}

\newcommand{\pmat}[1]{\ensuremath{ \begin{pmatrix} #1 \end{pmatrix} }}

% misc
\numberwithin{equation}{section}  % number equations within sections
\renewcommand{\phi}{\varphi}
\renewcommand{\epsilon}{\varepsilon}

% document specific stuff
\newcommand{\sts}[1]{\undertilde{\mathbf{#1}} }  % structured top. spaces
\newcommand{\1}{\mathbf{1}}
\newcommand{\5}{\mathbf{5}}
\renewcommand{\S}{\mathbf{S}}
\renewcommand{\P}{\mathbf{P}}
\newcommand{\Hom}{\text{Hom}}
%============================================================================}}}

\begin{document}
% title and abstract {{{
\title[Dualizable algebras omitting $\{ \1, \5 \}$ have a cube term.]
  {Naturally dualizable algebras omitting types $\1$ and $\5$ have a cube
  term}
\author{Matthew Moore}
\date{\today}

\email{matthew.moore@vanderbilt.edu}
\address{
  %(Matthew Moore)
  Vanderbilt University;
  Nashville, TN 37240;
  U.S.A.}

\begin{abstract}
An early result in the theory of Natural Dualities is that an algebra with a
near unanimity (NU) term is dualizable. A converse to this is also true: if
$\V(\A)$ is congruence distributive and $\A$ is dualizable, then $\A$ has an NU
term. An important generalization of the NU term for congruence distributive
varieties is the cube term for congruence modular (CM) varieties, and it has
been thought that a similar characterization of dualizability for algebras in a
CM variety would also hold. We prove that if $\A$ omits tame congruence types
$\1$ and $\5$ (all locally finite CM varieties omit these types) and is
dualizable, then $\A$ has a cube term.
\end{abstract} 

\maketitle % }}}

\section{Introduction}  % {{{
In a variety $\V$ with some term $t(x_1,\ldots,x_n)$, the term $t$ is said to be
a \emph{cube term for $\V$} if for every $1\leq i\leq n$ there is a choice of
$u_1,\ldots,u_n\in \{x,y\}$ with $u_i = y$ such that the identity $t(u_1,\ldots,
u_n) \approx x$ holds in $\V$. Examples of cube terms include Maltsev terms and
near unanimity terms. For a variety, the property of having a cube term has been
characterized and studied in the context of the algebraic approach to the
Constraint Satisfaction Problem (for instance, see \cite{BIMMVW_FewSubalgs}) as
well as in more classic Universal Algebraic settings (for instance, see
\cite{KS_ClonesAlgParallelogram}).

The near unanimity term condition has a long-standing and particularly nice
connection with the theory of Natural Dualities. One of the early results of the
theory is that if a finite algebra has a near unanimity term, then it admits a
natural duality. Davey, Heindorf, and McKenzie~\cite{DHM_NUObstacle} prove that
a converse to this result holds if we assume that the finite algebra belongs to
a congruence distributive variety: the finite algebra $\A$ has a near unanimity
term if and only if $\V(\A)$ is congruence distributive and $\A$ admits a
natural duality.

It is well-known that the presence of a near unanimity term for a variety
implies congruence distributivity. In a similar way, the presence of a cube term
for a variety implies congruence modularity (see \cite{BIMMVW_FewSubalgs}).
Since the cube term is a generalization of the near unanimity term, it was
thought that there might be a similar connection between dualizability for
finite algebras generating congruence modular varieties and the presence of a
cube term. A stronger condition than just the presence of a cube term is
required to prove dualizability, however, since the group $\m{S}_3$ generates a
congruence modular variety with a cube term and is dualizable, but the algebra
obtained from $\m{S}_3$ by adding constant operations for every element of $S_3$
also generates a congruence modular variety with a cube term but is
non-dualizable (this example is due to Idziak).

In this paper we prove that if a finite algebra omits tame congruence types $\1$
and $\5$ and does not have a cube term, then it is inherently non-dualizable.
Algebras omitting types $\1$ and $\5$ are easy to find: every finite algebra in
a congruence modular variety omits these types. We begin with a discussion of
Natural Dualities in Section~\ref{sec:natural_dualities}, then in
Section~\ref{sec:tools} we state and give references for the tools and
techniques used in the proof, and we finish by proving the main result in
Section~\ref{sec:theorem}.
%============================================================================}}}
\section{Natural Dualities}\label{sec:natural_dualities}   % {{{
The primary reference for the theory of Natural Dualities is Clark and Davey
\cite{CD_NatDualitiesWorkingAlg}, and we cannot possibly hope to go into an
equivalent level of detail here. The main tool used in this paper will be a
theorem about non-dualizability stated at the start of Section~\ref{sec:tools}.
The aim of this section is to provide a definition and to give some examples of
dualizable algebras.

Let $\A = \left< A; F \right>$ be a finite algebra. The theory of Natural
Dualities aims to characterize when there is a class of structured
topological spaces $\cl{X}$ that is dually equivalent to the quasivariety
generated by $\A$.

A \emph{structured topological space} is a structure $\sts{B} = \left<
B;G,H,R,\mathcal{T} \right>$, where $G$ is a set of total operations, $H$ is
a set of partial operations, $R$ is a set of relations, and $\mathcal{T}$ is
a topology (all on $B$). The structured topological space $\sts{A} = \left<
A; G,H,R,\mathcal{T}\right>$ with the same underlying set as $\A$ is called
an \emph{alter ego of $\A$} if the topology $\mathcal{T}$ is discrete and
\[
  \big\{ \text{graph}(f) \mid f\in G\cup H \big\} \cup R 
  \subseteq \bigcup_{n\in \ZZ_{>0}} \S(\A^n),
\]
where $\text{graph}(f) = \{(x,f(x)) \mid x\in \text{dom}(f)\}$ and
$\S(\A^n)$ is the set of all subalgebras of $\A^n$. This is equivalent to
the condition that every operation in $G\cup H$ has domain equal to a
subalgebra of a power of $\A$ and is a homomorphism from that subalgebra to
$\A$ and that every relation in $R$ is a subalgebra of a power of $\A$. Fix
a particular alter ego $\sts{A}$ of $\A$. The two categories that we will be
considering are $\cl{A} = \S\P(\A)$ (the quasivariety generated by $\A$) and
$\cl{X} = \S_c\P^+(\sts{A})$ (the class of closed substructures of non-zero
powers of $\sts{A}$).

For $\m{B}\in \cl{A}$, we define the \emph{dual of $\m{B}$} to be
$\m{B}^{\partial} = \Hom(\m{B}, \A)\in \mathcal{X}$. For $\sts{B}\in \cl{X}$,
we define the \emph{dual of $\sts{B}$} to be $\sts{B}^{\partial} =
\Hom(\sts{B}, \sts{A})\in \mathcal{A}$ (the set of all continuous structure
preserving homomorphisms from $\sts{B}$ to $\sts{A}$). That $\m{B}^\partial$
and $\sts{B}^\partial$ are members of their respective categories is a
consequence of $\sts{A}$ being an alter ego of $\A$. For each $\m{B}\in
\mathcal{A}$ we have the natural mapping of ``evaluation at $x$'',
\[ \begin{array}{rlcl}
  e_{\m{B}}: & \m{B} & \to & \m{B}^{\partial\partial} \\
  & x & \mapsto & 
    \left( e_{\m{B}}(x):\m{B}^{\partial} \to \sts{A}: y \mapsto y(x)
    \right),
\end{array} \]
and it is straightforward to show that this map is injective. When for each
$\m{B}$ the mapping $e_{\m{B}}$ is additionally a surjection, then we say
that \emph{$\sts{A}$ dualizes $\m{A}$} or (when we do not wish to mention
$\sts{A}$) that \emph{$\m{A}$ admits a (natural) duality} or is
\emph{dualizable}.

Examples of algebras which admit a natural duality include 
\begin{itemize}
  \item groups whose Sylow subgroups are abelian (this is an
    equivalence)~\cite{N_NatDualGroups},
  \item commutative rings whose Jacobson radical squares to $(0)$ (this is also
    an equivalence)~\cite{CISSW_NatDualCommRings},
  \item algebras with a compatible semilattice
    operation~\cite{DJPT_NatDualSemilat}, and
  \item algebras that have a near unanimity term
    operation~\cite{DHM_NUObstacle}.
\end{itemize}
One of the main goals of the theory is to give algebraic characterizations of
dualizability instead of category-theoretical ones. Quite a lot has been
achieved to this end, for instance the characterization of dualizability in
terms of a certain kind of entailment of relations given by
Zadori~\cite{Z_DualityFinRels} and more generally by Davey, Haviar, and
Priestley~\cite{DHP_SyntaxSemanticsEntailmentDuality}.
%============================================================================}}}
\section{Tools}\label{sec:tools}   % {{{
The proof of the theorem contained in the next section uses several tools and
techniques from the theory of Natural Dualities and Tame Congruence Theory, as
well as some techniques used by Markovic, Maroti, and McKenzie in
\cite{MMM_FinClonesCube} that are associated with characterizing when a finite
idempotent algebra has a cube term. In this section we will state and provide
references for these.

Let $\A$ be a finite algebra. $\A$ is said to be \emph{inherently
non-dualizable} if for all finite algebras $\m{B}$ we have that $\A\in
\S\P(\m{B})$ implies $\m{B}$ is non-dualizable. Davey, Idziak, Lampe, and
McNulty~\cite{DILM_DualizabilityGraphAlgebras} give sufficient conditions for an
algebra to be inherently non-dualizable in the theorem below, and the majority
of our efforts in the next section will be to verify that the two numbered
hypotheses of this theorem hold.
\begin{thm}[\cite{DILM_DualizabilityGraphAlgebras}, Theorem $3$] %  {{{
\label{thm:inherent_non-dualizability}
Let $Z$ be an index set, $\A$ a finite algebra, $\m{B}\leq \A^Z$, and
$B_0\subseteq B$ be an infinite subset such that
\begin{enumerate}
  \item there is a function $\phi:\omega\to \omega$ such that for all $k\in
  \omega$ and all $\theta\in \Con(\m{B})$ of index at most $k$,
  $\theta|_{B_0}$ has a unique block of size greater than $\phi(k)$; and

  \item if the element $g\in A^Z$ is defined by $g(z) = a_z(z)$ for $z\in
  Z$, where $a_z$ is an element of the unique block of $\ker(\pi_z)|_{B_0}$
  of size greater than $\phi(|A|)$, then $g\not\in B$.
\end{enumerate}
Then $\A$ is inherently non-dualizable.
\end{thm}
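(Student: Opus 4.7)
The plan is to prove the contrapositive: suppose $\A \in \S\P(\m{C})$ for some finite algebra $\m{C}$ equipped with an alter ego $\sts{C}$, and show that the evaluation map $e_{\m{B}}\colon \m{B} \to \m{B}^{\partial\partial}$ fails to be surjective, which prevents $\sts{C}$ from dualizing $\m{C}$. Since $\m{B}\leq \A^Z$ and $\A\in \S\P(\m{C})$, the algebra $\m{B}$ embeds into a power of $\m{C}$, so its dual $\m{B}^\partial = \Hom(\m{B}, \m{C})$ may be formed inside $\S_c\P^+(\sts{C})$. The aim is to exhibit a ``ghost'' morphism $\hat{g}\in \m{B}^{\partial\partial}$ that lies outside the image of $e_{\m{B}}$.

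For the construction, I would use a compactness argument indexed by finite subsets of the dual. For each finite $F \subseteq \m{B}^\partial$ the joint kernel $\theta_F = \bigcap_{f\in F} \ker(f)$ is a congruence of $\m{B}$ of finite index. By hypothesis (1), $\theta_F|_{B_0}$ has a unique block $L_F$ of size exceeding $\phi(|\m{B}/\theta_F|)$; pick $b_F \in L_F$. By Tychonoff compactness of $C^{\m{B}^\partial}$, the net $(e_{b_F})_F$ has an accumulation point $\hat{g}$ along any ultrafilter on the directed set of finite subsets of $\m{B}^\partial$ that refines the cofinal filter. A routine but delicate check, using discreteness of the topology on $\sts{C}$ and the fact that every operation and relation in $\sts{C}$ is a subalgebra of some $\m{C}^n$, shows that $\hat{g}$ is continuous and structure-preserving, so $\hat{g}\in \m{B}^{\partial\partial}$.

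The decisive step is to show $\hat{g}\notin e_{\m{B}}(B)$, and here hypothesis (2) is essential. For any finite $Z_0\subseteq Z$ one can arrange that $F$ contains the restricted projections $\pi_z|_{\m{B}}$ for each $z\in Z_0$; the unique large block of $\theta_F|_{B_0}$ then has a single value at each such $z$, and by the very definition of $g$ in (2), that value must be $g(z)$. Passing to the limit, $\hat{g}$ records $g(z)$ at $\pi_z$ for every $z\in Z$. Hence if $\hat{g} = e_b$ for some $b \in B$, we would have $b(z) = g(z)$ for all $z\in Z$, forcing $b = g$ and contradicting $g\notin B$.

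The main obstacle I foresee is verifying that $\hat{g}$ genuinely lies in $\m{B}^{\partial\partial}$: the compatibility conditions must hold simultaneously for every total operation, partial operation, and relation of $\sts{C}$, and each of these gives rise to a distinct finite-index congruence of $\m{B}$ that $\hat{g}$ must respect. Here the uniformity of $\phi$ in hypothesis (1)---the fact that a \emph{single} bound works for all congruences of a given index---is exactly what allows the coherent selection of $b_F$ to survive passage to the limit. Handling partial operations in $H$ is especially delicate, since the domain constraints add another layer of finite-index congruences that the ultralimit must be compatible with.
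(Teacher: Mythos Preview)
This theorem is not proved in the paper: it is quoted from Davey, Idziak, Lampe, and McNulty and invoked as a black-box tool in the proof of Theorem~\ref{thm:omits_1_5_no_cube_then_non-dualizable}. There is therefore no argument in the present paper to compare your attempt against.

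For what it is worth, your sketch is the standard ghost-element strategy from the original source, and the overall architecture is sound. If you flesh it out, three points deserve care. First, the coordinate projections $\pi_z$ land in $\A$, not in $\m{C}$; since $\A\in\S\P(\m{C})$ you must compose with an embedding $\A\hookrightarrow\m{C}^I$ and use the resulting finite family of maps into $\m{C}$, whose joint kernel recovers $\ker(\pi_z)$. Second, the coherence you need---that the large block $L_{F'}$ of a finer $\theta_{F'}$ sits inside the large block $L_F$ of a coarser $\theta_F$---requires $\phi$ to be nondecreasing; this can be assumed without loss by replacing $\phi(k)$ with $\max_{j\le k}\phi(j)$. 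Third, rather than verifying continuity and preservation of each operation, partial operation, and relation of $\sts{C}$ by hand, it is cleaner to observe that your $\hat g$ lies in the closure of $\{e_b:b\in B\}$ inside $C^{\m{B}^\partial}$ and then invoke the general fact from the theory that this closure is contained in $\m{B}^{\partial\partial}$; this sidesteps the difficulty you flag in your last paragraph.
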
  % }}}

Congruence covers in a finite algebra can be classified into five types
(enumerated as types $\1, \ldots, \5$), and the Tame Congruence Theory of Hobby
and McKenzie~\cite{HM_TCT} gives great insight into how the presence or absence
of these types in the congruence lattices of algebras in a locally finite
variety can be recognized in terms of Maltsev conditions and congruence
conditions. Theorem 9.8 of Hobby and McKenzie~\cite{HM_TCT} proves that a
locally finite variety omits types $\1$ and $\5$ if and only if it satisfies
some idempotent Maltsev condition not satisfied by the variety of all
semilattices. Theorem 5.28 of Kearnes and
Kiss~\cite{KK_ShapeOfCongruenceLattices} proves that this
latter condition is equivalent to a single particular Maltsev condition (item
(2) of the theorem below). Putting these two results together gives us the next
Theorem.
\begin{thm}[\cite{HM_TCT}, Theorem 9.8, and   % {{{
  \cite{KK_ShapeOfCongruenceLattices}, Theorem 5.28] 
\label{thm:term_cond_omit_1_5}
The following are equivalent for a locally finite variety $\V$.
\begin{enumerate}
  \item $\V$ omits types $\1$ and $\5$.

  \item $\V$ has a sequence of idempotent terms $f_i(x,y,u,v)$ for $0\leq
  i\leq 2m+1$, such that 
  \begin{enumerate}
    \item $\V\models f_0(x,y,u,v) \approx x$ and $\V\models
      f_{2m+1}(x,y,u,v) \approx v$,
    \item $\V\models f_i(x,y,y,y) \approx f_{i+1}(x,y,y,y)$ for all even $i$,
    \item $\V\models f_i(x,x,y,y) \approx f_{i+1}(x,x,y,y)$ for all odd $i$,
      and
    \item $\V\models f_i(x,y,x,y) \approx f_{i+1}(x,y,x,y)$ for all odd $i$.
  \end{enumerate}
\end{enumerate}
\end{thm}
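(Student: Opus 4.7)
The statement is presented as a direct amalgamation of two already-cited theorems, so my proof proposal is essentially to verify that the two results mesh cleanly. The plan is to route both directions through the intermediate condition
\begin{center}
($\ast$) \ $\V$ satisfies some idempotent Maltsev condition that fails in the variety of semilattices.
\end{center}
By Hobby--McKenzie (Theorem 9.8 of \cite{HM_TCT}), condition (1) of the theorem is equivalent to ($\ast$). By Kearnes--Kiss (Theorem 5.28 of \cite{KK_ShapeOfCongruenceLattices}), condition (2) of the theorem is equivalent to ($\ast$). Thus (1) $\Leftrightarrow$ ($\ast$) $\Leftrightarrow$ (2), and the theorem follows. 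My only job, then, is to sanity-check the translation between the two papers' formulations.

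For the implication (2) $\Rightarrow$ ($\ast$) I would verify by hand that the system $\{f_i : 0 \le i \le 2m+1\}$ in (2) is genuinely an idempotent Maltsev condition that no semilattice can satisfy. Idempotence is part of the hypothesis, and Maltsev-condition form is immediate since (2a)--(2d) are equations between terms. To see that the condition fails in semilattices, regard each idempotent semilattice term $f_i(x,y,u,v)$ as a nonempty subset $V_i \subseteq \{x,y,u,v\}$ (its set of essential variables). The identities in (2b)--(2d) force, respectively, that $V_i$ and $V_{i+1}$ agree after the substitutions $u\mapsto y$, $v\mapsto y$; after $y\mapsto x$, $v\mapsto y$; and after $u\mapsto x$, $v\mapsto y$. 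Walking these identifications from $V_0 = \{x\}$ across the chain shows every $V_i \subseteq \{x\}$, contradicting $V_{2m+1} = \{v\}$; this is the content that makes ($\ast$) hold.

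The direction ($\ast$) $\Rightarrow$ (2) is the substantive content of Kearnes--Kiss Theorem 5.28 and I would simply cite it: any idempotent Maltsev condition that separates $\V$ from semilattices can be refined to produce the specific four-variable chain $f_0,\ldots,f_{2m+1}$ above. The equivalence (1) $\Leftrightarrow$ ($\ast$) is the content of Hobby--McKenzie Theorem 9.8 and is cited likewise. The main (indeed only) obstacle is matching the precise shape of the Maltsev condition stated here with the slightly different formulations used in \cite{HM_TCT} and \cite{KK_ShapeOfCongruenceLattices}; once that bookkeeping is done, no further argument is needed.
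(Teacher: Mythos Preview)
Your proposal is correct and matches the paper exactly: the paper gives no formal proof for this theorem but simply notes in the preceding paragraph that Hobby--McKenzie Theorem~9.8 yields (1) $\Leftrightarrow$ ($\ast$) and Kearnes--Kiss Theorem~5.28 yields ($\ast$) $\Leftrightarrow$ (2), which is precisely your route. Your semilattice sanity check is a welcome elaboration; note only the minor slip that the substitution for (2c), namely $f_i(x,x,y,y)$, should read $y\mapsto x$, $u\mapsto y$, $v\mapsto y$ rather than just $y\mapsto x$, $v\mapsto y$.
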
  % }}}
If a locally finite variety is congruence modular then it omits types $\1$ and
$\5$. Thus, finite algebras in a congruence modular variety have terms
satisfying the Maltsev condition of the above theorem. In fact, by reindexing
and rearranging some of the variables, the Day Terms introduced in
\cite{D_DayTerms} satisfy this Maltsev condition.

Markovic, Maroti, and McKenzie~\cite{MMM_FinClonesCube} provide a useful
characterization of those finite idempotent algebras that have cube terms, which
we will now summarize. Fix an algebra $\A$ and elements $a,b\in A$. If there is
a term $t(x_1,\ldots,x_n)$ and tuples $u_i\in \{a,b\}^m\setminus \{a\}^m$ for
some $m\in \ZZ_{>0}$ such that
\[
  t(u_1(j),\ldots,u_n(j)) = a
\]
for all $1\leq j\leq m$, then we will write $a\prec b$. Observe that if $\A$
has a cube term then $a\prec b$ for all $a,b\in A$. If $\A$ has subalgebras
$\m{D}\lneq \m{B}\leq \A$ such that for every term $t(x_1,\ldots,x_n)$ there
is some $i$ with
\[
  t(B,\ldots,\stackrel{i}{\hat{D}},\ldots,B) \subseteq D,
\]
then the pair $(D,B)$ is called a \emph{cube term blocker} for $\A$. 

\begin{thm}[\cite{MMM_FinClonesCube}, Theorem 2.1]  % {{{
\label{thm:cube_blockers}
Let $\A$ be a finite idempotent algebra. Then $\A$ has a cube term if and
only if it has no cube term blockers.
\end{thm}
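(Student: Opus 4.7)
The forward direction (cube term $\Rightarrow$ no blocker) is the easy one, to be handled by a direct substitution argument. Suppose, for contradiction, that $\A$ has a cube term $t(x_1,\ldots,x_n)$ and that $(D,B)$ is a cube term blocker, with blocking position $i$ so that $t(B,\ldots,\hat D,\ldots,B)\subseteq D$. The cube-term identity at position $i$ supplies $u_1,\ldots,u_n\in\{x,y\}$ with $u_i=y$ such that $t(u_1,\ldots,u_n)\approx x$. Pick $b\in B\setminus D$ and $d\in D$ and substitute $x:=b$, $y:=d$. Position $i$ then receives $d\in D$ while every other position receives an element of $\{b,d\}\subseteq B$, so the blocker condition forces $t(\ldots)\in D$; but the identity forces it to equal $b\notin D$, a contradiction.

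For the reverse direction I would argue the contrapositive: assume $\A$ has no cube term and manufacture a blocker. The guiding idea is that $(D,B)$ fails to be a blocker exactly when there exists a term $t$ which, for every position $i$, admits a tuple $(b^{(i)}_1,\ldots,d,\ldots,b^{(i)}_n)$ (with $d\in D$ in position $i$ and the rest in $B$) whose $t$-image lies outside $D$. Equivalently, in the language of the paper, the non-blocker condition for $(D,B)$ can be rephrased as the existence of a term whose behavior on $\{D,B\}$-profiles mimics a cube term on that pair; and as observed in the paragraph defining $\prec$, a cube term is exactly a simultaneous witness of $a\prec b$ for \emph{all} $a,b\in A$.

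My plan is therefore to: (i) for each pair of subalgebras $D\lneq B\leq\A$ with no blocker among them, extract a term $t_{D,B}$ witnessing the non-blocker condition; (ii) combine the finitely many terms $t_{D,B}$ (the finiteness comes from $|A|<\infty$) into a single term $t$ by iterated substitution, using idempotency to preserve each pair's witnessing behavior through the composition; (iii) verify that the resulting $t$ is a cube term, ultimately because its restriction to each two-element subset $\{a,b\}$ realizes the $\prec$-witness. Contrapositively, if no cube term exists, then the combination step must fail at some step, and the obstruction is concentrated on a single pair $(D,B)$ whose non-blocker witnesses are mutually incompatible across positions; this pair is the desired blocker.

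The main obstacle is step (ii): assembling per-pair local witnesses into a single global term while preserving the cube-term behavior on every pair simultaneously. The difficulty is that substituting $t_{D',B'}$ into $t_{D,B}$ can destroy the witnessing profile for $(D,B)$ unless idempotency and careful coordinate management are invoked; showing that some systematic composition stabilizes, or equivalently that failure to stabilize localizes to a single blocker, is where the real work lies and is why the theorem requires the idempotency hypothesis.
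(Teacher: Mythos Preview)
The paper does not prove this theorem itself; it is quoted from \cite{MMM_FinClonesCube} as a tool, so there is no in-paper argument to compare against.

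Your forward direction is correct and is the standard argument.

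Your reverse direction is only a plan, and you rightly flag step~(ii) as the crux. But the plan as stated has a structural mismatch that makes step~(ii) harder than necessary. The non-blocker witness $t_{D,B}$ guarantees, for each position $i$, only that \emph{some} tuple over $B$ with $i$-th entry in $D$ lands outside $D$; this set-level existential is far from a cube-term identity, which is a two-variable equation. Splicing such witnesses across all subalgebra pairs $(D,B)$ is not obviously convergent, and your closing sentence (``failure to stabilize localizes to a single blocker'') is an assertion, not an argument. The proof in \cite{MMM_FinClonesCube} instead works through the elementwise relation $a\prec b$ defined just before the theorem: one shows (a) $\A$ has a cube term iff $a\prec b$ for all $a,b\in A$, and (b) if $a\not\prec b$ for some pair then one builds a blocker $(D,B)$ with $a\in B\setminus D$ and $b\in D$ (the remarks the paper makes immediately after the theorem, about a minimal $\m{B}$ without a cube term, are describing the output of exactly this construction). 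Step~(a) is where terms get composed, but because a witness to $a\prec b$ is already a finite list of equations in the two constants $a,b$, idempotent substitution merges witnesses for different pairs in a controlled way. Your framing in terms of arbitrary subalgebra pairs discards this two-variable structure, which is what makes your step~(ii) look intractable.
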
  % }}}

Suppose that $\A$ is finite, idempotent, and does not have a cube term, and let
$\m{B}\leq \A$ be minimal for not having a cube term. In this case, the cube
term blocker for $\A$ can be taken to be of the form $(D,B)$, and we can make
two useful observations about $\m{B}$ and $\m{D}$:
\begin{itemize}
  \item if $u,v\in B$ are such that $u\not\prec v$, then $\{u,v\}$ generates
    $\m{B}$;
  \item if $v\in D$ and $u\in B\setminus D$, then $u\not\prec v$ and thus
    $\{u,v\}$ generates $\m{B}$.
\end{itemize}
Both observations follow from $\m{B}$ being minimal for not having a cube term
and from $(D,B)$ being a cube term blocker. These observations and
Theorem~\ref{thm:cube_blockers} will be the starting point for the proof of the
theorem contained in the next section.

The last tool that we will need is the existence of a weak near unanimity
term. A term $t(x_1,\ldots,x_n)$ of $\V$ is said to be a \emph{weak near
unanimity term for $\V$} if it is idempotent and
\[
  \V\models 
  t(y,x,\ldots,x) 
  \approx t(x,y,x,\ldots,x) 
  \approx \cdots 
  \approx t(x,\ldots,x,y).
\]
For finitely generated idempotent varieties $\V$, Maroti and McKenzie
\cite{MM_WNU} show that $\V$ has a weak near unanimity term of arity at least
$2$ if and only if $\V$ omits type $\1$ (such varieties are called \emph{Taylor}
varieties).
%============================================================================}}}
\section{The Theorem}\label{sec:theorem}   % {{{
\begin{thm}\label{thm:omits_1_5_no_cube_then_non-dualizable}   % {{{
Let $\A$ be a finite algebra such that $\V(\A)$ omits types $\1$ and $\5$.
If $\A$ does not have a cube term, then $\A$ is inherently non-dualizable.
\end{thm}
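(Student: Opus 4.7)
The plan is to apply Theorem~\ref{thm:inherent_non-dualizability} by constructing an index set $Z$, a subalgebra $\m{B}\leq \A^Z$, and an infinite subset $B_0\subseteq B$ satisfying its two numbered hypotheses. The first step is to pass to the idempotent reduct $\A^{\text{id}}$: since every term operation of $\A^{\text{id}}$ is a term operation of $\A$, the hypothesis that $\A$ has no cube term implies that $\A^{\text{id}}$ has none either. Applying Theorem~\ref{thm:cube_blockers} to a minimal cube-term-free subalgebra $\m{B}'\leq \A^{\text{id}}$ yields a cube term blocker $(D,B')$; pick $a\in D$ and $b\in B'\setminus D$, so that by the bulleted observations following Theorem~\ref{thm:cube_blockers} the pair $\{a,b\}$ generates $\m{B}'$. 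Also available throughout are the idempotent Maltsev terms $f_0,\ldots,f_{2m+1}$ from Theorem~\ref{thm:term_cond_omit_1_5} and a weak near unanimity term from Mar\'oti--McKenzie.

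The construction of $B_0$ should be arranged so that each $g_n\in A^Z$ equals $a$ on all but a small, $n$-indexed ``signature'' set of coordinates, with the signature chosen to encode, via repeated applications of the $f_i$ to inputs drawn from $\{a,b\}$, precisely the interactions of $a$ and $b$ under idempotent operations of $\m{B}'$. Setting $B_0=\{g_n:n\in\omega\}$ and $\m{B}=\Sg^{\A^Z}(B_0)$, condition~(1) can be verified by fixing a congruence $\theta$ of $\m{B}$ of index at most $k$: because the $g_n$ agree on all background coordinates, $\theta$ can only separate two $g_n$'s through a projection that sees their signatures, and a pigeonhole argument together with the index bound $k$ forces only boundedly many $g_n$ to be so separated, leaving a unique $\theta$-block on $B_0$ of size larger than some explicit $\phi(k)$ determined by the combinatorics of the signatures.

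For condition~(2), the element $g$ defined as in the statement of Theorem~\ref{thm:inherent_non-dualizability} will agree with $a$ at every background coordinate (since on such coordinates the unique large block of $\ker(\pi_z)|_{B_0}$ consists of almost all $g_n$, each taking the value $a$ there) and will take a prescribed $a/b$-value at each signature coordinate. Supposing for contradiction that $g=t^{\A^Z}(g_{n_1},\ldots,g_{n_r})$ for some term $t$, evaluating coordinatewise at signature positions would yield identities of the form $t(a,\ldots,b,\ldots,a)=a$ valid in $\m{B}'$; collecting these identities across all signatures and all argument positions and applying the cube term blocker dichotomy at each position in turn, one extracts from $t$ a witness to the cube term identities on $\m{B}'$, contradicting Theorem~\ref{thm:cube_blockers}. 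The main obstacle is designing the signature pattern so that the verification of~(2) really does force the existence of a cube term on $\m{B}'$ while simultaneously keeping the congruence analysis in~(1) tractable; it is here that the full strength of the idempotent Maltsev condition of Theorem~\ref{thm:term_cond_omit_1_5}, rather than merely the existence of a WNU term, is expected to do the heavy lifting.
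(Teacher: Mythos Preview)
Your outline has the architecture of the argument essentially inverted, and the sketch of condition~(1) does not work as stated. You propose to handle~(1) by a pigeonhole argument: ``$\theta$ can only separate two $g_n$'s through a projection that sees their signatures.'' But $\theta$ is an \emph{arbitrary} finite-index congruence of $\m{B}=\Sg^{\A^Z}(B_0)$, not the kernel of a coordinate projection, so there is no reason it must distinguish generators only via signature coordinates. In the paper this is the hard step: one takes $\phi$ constant equal to $1$, assumes two $\theta|_{B_0}$-blocks each have size at least $2$, and then uses the weak near unanimity term together with the cube-term-blocker structure to manufacture auxiliary elements (with two and three $b$'s in their signatures) lying in a common $\theta$-class; only then do the $f_i$ from Theorem~\ref{thm:term_cond_omit_1_5} come in, to chain these auxiliary elements back to the original generators and force them into the same block. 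So the full Maltsev condition is spent on~(1), not on~(2).

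Conversely, condition~(2) is the easy step, but your sketch omits the device that makes it work. The generators must carry, in addition to their signature coordinates, a block of ``diagonal'' coordinates on which every generator takes the value $a_j$ at coordinate $j$, one for each element of $A$. These coordinates force any term $t$ realizing $g$ from the generators to satisfy $t(a_j,\ldots,a_j)=a_j$ for all $j$, i.e.\ to be idempotent; only then does evaluating at the signature coordinates contradict $a\not\prec b$ in the idempotent reduct. Without this, a non-idempotent term of $\A$ could produce $g$ and no contradiction with the cube term blocker (which lives in $\A^{\text{id}}$) follows. Also note that in the paper the background value $a$ lies in $B\setminus D$ and the signature value $b$ lies in $D$, the reverse of your choice; this orientation is what makes the WNU computations in~(1) land in $D$.
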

\begin{proof}
Assume that $\A$ does not have a cube term, and let $\A_I$ be the idempotent
reduct of $\A$. Observe that $\A_I$ also does not have a cube term, and select
idempotent $\m{B}\leq \A_I$ minimal such that $\m{B}$ does not have a cube term.
Since $\m{B}$ is idempotent and minimal for not having a cube term, by the
observations in Section~\ref{sec:tools} we can fix a cube term blocker $(D,B)$
for $\m{B}$ and elements $a\in B\setminus D$ and $b\in D$ such that $a\not\prec
b$ in $\A_I$. 

Enumerate the elements of $\A$ as $A = \{ a_0, a_{-1}, \ldots, a_{-n} \}$,
and define elements of $A^\ZZ$
\[
  \alpha_{i_1 i_2 \cdots i_n}^{y_1 y_2 \cdots y_n}(j) = \begin{cases}
    a_j & \text{if } j\in [-n,0], \\
    y_k & \text{if } j = i_k, \\
    a   & \text{otherwise}, \end{cases}
\]
for any $y_1,\ldots, y_n\in A$ and $i_1,\ldots,i_n\not\in [-n,0]$. If all
of the $y_i$ are equal to $b$, then we will omit them from the notation.
That is,
\[
  \alpha_{i_1 \cdots i_n}(j) = \begin{cases} 
    a_j & \text{if } j\in [-n,0], \\
    b & \text{if } j\in \{i_1,\ldots,i_n\}, \\
    a & \text{otherwise}, \end{cases}
\]
for $i_1,\ldots, i_k\not\in [-n,0]$. Let
\[
  C_0 
  = \left\{ \alpha_i \mid i\in \ZZ\setminus [-n,0] \right\}
  \qquad \qquad \text{and} \qquad \qquad
  \m{C} 
  = \Sg^{\A^\ZZ}( C_0 )
\]
(note that $\m{C}$ need not be idempotent). When we are performing calculations
in $\m{C}$ using idempotent terms, we will omit calculations like $t(a,\ldots,
a) = a$ and $t(a_j,\ldots, a_j) = a_j$ for $j\in [-n,0]$.

We will apply Theorem~\ref{thm:inherent_non-dualizability} to this situation
to show that $\A$ is inherently non-dualizable. Our first step is to show
that (1) of that theorem holds. Let $\phi:\ZZ\to\ZZ$ be defined to be the
constant function $\phi(k) = 1$, and suppose that $\theta\in \Con(\m{C})$
has finite index and that $\theta|_{C_0}$ has two blocks 
\[
  \left\{ \alpha_i \mid i\in S \right\}
  \qquad\qquad \text{and} \qquad\qquad
  \left\{ \alpha_i \mid i\in T \right\},
\]
with $|S|, |T| > 1$ and $S\cap T = \emptyset$. For ease of writing, say
$1,3\in S$ and $2,4\in T$

\begin{claim}
The set
\[
  \big\{ \alpha_{mn} \mid (m,n)\in \{1,3\}\times \{2,4\} \big\} \cup 
    \big\{ \alpha_{mnk} \mid 
    (m,n,k)\in \{1,3\} \times \{2,4\} \times \{1,2,3,4\} \big\}
\]
is contained in a single $\theta$-block.
\end{claim}
\begin{claimproof}
We will frequently use the fact that if $u\in D$, then by the minimality of
$\m{B}$ the set $\{u,a\}$ generates $\m{B}$ via idempotent terms of $\A_I$.
$\V(\A)$ omits type $\1$, so let $w(x_1,\ldots,x_n)$ be a weak near
unanimity term for $\A$ (and hence for $\A_I$). If $u\in D$ and $v\in B$,
then since $(D,B)$ is a cube term blocker for $\m{B}$,
\begin{align*}
  & w(u,v,\ldots,v)
    = w(v,u,v,\ldots,v)
    = \cdots
    = w(v,v,\ldots,v,u)\in D & \text{and} \\
  & w(v,u,\ldots,u)
    = w(u,v,u,\ldots,u)
    = \cdots
    = w(u,u,\ldots,u,v)\in D.
\end{align*}
From this and since $\alpha_1 \;\theta\; \alpha_3$ and $\alpha_2 \;\theta\;
\alpha_4$, it follows that
\[
  w(\alpha_1,\alpha_2,\ldots,\alpha_2)
  = w\pmat{ \vdots &   &        & \vdots \\
            b      & a & \cdots & a      \\
            a      & b & \cdots & b      \\
            \vdots &   &        & \vdots }
  = \alpha_{12}^{cd}
  \qquad \text{and} \qquad
  \alpha_{12}^{cd} 
  \;\theta\; \alpha_{32}^{cd} 
  \;\theta\; \alpha_{14}^{cd} 
  \;\theta\; \alpha_{34}^{cd},
\]
for some $c,d\in D$. By the minimality of $\m{B}$, the set $\{c,a\}$ must
idempotently generate $\m{B}$. Thus there is an idempotent term $t(x,y)$
such that $b = t(c,a)$. Therefore
\[
  t(\alpha_{12}^{cd}, \alpha_2)
  = t\pmat{\vdots & \vdots \\
           c      & a      \\
           d      & b      \\
           \vdots & \vdots }
  = \alpha_{12}^{be}
  \qquad\qquad \text{and} \qquad\qquad
  \alpha_{12}^{be} 
  \;\theta\; \alpha_{32}^{be} 
  \;\theta\; \alpha_{14}^{be} 
  \;\theta\; \alpha_{34}^{be},
\]
for some $e\in D$ since $D$ is a subuniverse of $\A_I$ and $d,b\in D$. Since
$e\in D$, the set $\{e,a\}$ idempotently generates $\m{B}$. Thus there is an
idempotent term $s(x,y)$ such that $b = s(e,a)$. Therefore
\[
  s(\alpha_{12}^{be}, \alpha_1)
  = s\pmat{\vdots & \vdots \\
           b      & b      \\
           e      & a      \\
           \vdots & \vdots }
  = \alpha_{12}.
  \qquad\qquad \text{and} \qquad\qquad
  \alpha_{12} 
  \;\theta\; \alpha_{32}
  \;\theta\; \alpha_{14}
  \;\theta\; \alpha_{34}.
\]
Using the weak near unanimity term again,
\[
  w(\alpha_{12}, \alpha_3,\ldots, \alpha_3)
  = w\pmat{ \vdots &   &        & \vdots \\
            b      & a & \cdots & a      \\
            b      & a & \cdots & a      \\
            a      & b & \cdots & b      \\
            \vdots &   &        & \vdots }
  = \alpha_{123}^{ccd}
\]
and
\[
  \alpha_{123}^{ccd} 
  \;\theta\; \alpha_{143}^{ccd}
  \;\theta\; \alpha_{12}^{bc} 
  \;\theta\; \alpha_{32}^{bc} 
  \;\theta\; \alpha_{14}^{bc}
  \;\theta\; \alpha_{34}^{bc},
\]
Using the term $t(x,y)$ again, we have
\[
  t(\alpha_{123}^{ccd}, \alpha_3)
  = t\pmat{ \vdots & \vdots \\
            c      & a      \\
            c      & a      \\
            d      & b      \\
            \vdots & \vdots }
  = \alpha_{123}^{bbe}
  \qquad \text{and} \qquad
  \alpha_{123}^{bbe} 
  \;\theta\; \alpha_{143}^{bbe} 
  \;\theta\; \alpha_{12}
  \;\theta\; \alpha_{32}
  \;\theta\; \alpha_{14}
  \;\theta\; \alpha_{34}.
\]
Using the term $s(x,y)$ again, we have
\[
  s(\alpha_{123}^{bbe}, \alpha_{12})
  = s\pmat{ \vdots & \vdots \\
            b      & b      \\
            b      & b      \\
            e      & a      \\
            \vdots & \vdots }
  = \alpha_{123}
  \qquad \text{and} \qquad
  \alpha_{123} 
  \;\theta\; \alpha_{134}
  \;\theta\; \alpha_{12}
  \;\theta\; \alpha_{32}
  \;\theta\; \alpha_{14}
  \;\theta\; \alpha_{34}.
\]
A similar argument will give us that $\alpha_{124} \;\theta\; \alpha_{234}
\;\theta\; \alpha_{12}$ as well, completing the proof of the claim.
\end{claimproof}

\begin{claim}
$\alpha_1 \;\theta\; \alpha_2$.
\end{claim}
\begin{claimproof}
$\V(\A)$ omits types $\1$ and $\5$, so let $f_i(x,y,u,v)$ for $0\leq i\leq
2m+1$ be the idempotent terms from Theorem~\ref{thm:term_cond_omit_1_5}. If
$i$ is even then
\[
  f_i(\alpha_1,\alpha_{12},\alpha_{12},\alpha_{12}) 
  = f_{i+1}(\alpha_1,\alpha_{12},\alpha_{12},\alpha_{12}).
\]
If $i$ is odd then by the previous claim,
\begin{align*}
  f_i(\alpha_1, \alpha_{12}, \alpha_{12}, \alpha_{12})
  & \;\theta\; f_i(\alpha_1, \alpha_{12}, \alpha_{34}, \alpha_{234}) \\
  & = f_i\pmat{\vdots &   &   & \vdots \\
               b      & b & a & a      \\
               a      & b & a & b      \\
               a      & a & b & b      \\
               a      & a & b & b      \\
               \vdots &   &   & \vdots }
  = f_{i+1}\pmat{\vdots &   &   & \vdots \\
                 b      & b & a & a      \\
                 a      & b & a & b      \\
                 a      & a & b & b      \\
                 a      & a & b & b      \\
                 \vdots &   &   & \vdots }  \\
  & = f_{i+1}(\alpha_1, \alpha_{12}, \alpha_{34}, \alpha_{234})
  \;\theta\; f_{i+1}(\alpha_1, \alpha_{12}, \alpha_{12}, \alpha_{12}).
\end{align*}
Combining both of these, we have that $f_i(\alpha_1, \alpha_{12},
\alpha_{12}, \alpha_{12}) \;\theta\; f_{i+1}(\alpha_1, \alpha_{12},
\alpha_{12}, \alpha_{12})$ for all $i$, so
\[
  \alpha_1
  = f_0(\alpha_1,\alpha_{12},\alpha_{12},\alpha_{12})
  \;\theta\; f_{2m+1}(\alpha_1,\alpha_{12},\alpha_{12},\alpha_{12})
  = \alpha_{12}.
\]
A similar argument will show that $\alpha_{2} \;\theta\; \alpha_{12}$ as
well.
\end{claimproof}

Returning to the main proof, we now have $\alpha_1\;\theta\; \alpha_2$,
which contradicts $S\cap T = \emptyset$. Therefore there can be only one
block of $\theta|_{C_0}$ of size greater than $1$. This is item (1) from
Theorem~\ref{thm:inherent_non-dualizability}. 

We will now prove that item (2) from
Theorem~\ref{thm:inherent_non-dualizability} also holds. Let $\alpha\in
A^\ZZ$ be defined by
\[
  \alpha(j) = \begin{cases}
    a_j & \text{if } j\in [-n,0], \\
    a   & \text{otherwise}
  \end{cases}
\]
(recall that elements of $\A$ were enumerated $A =
\{a_0,a_{-1},\ldots,a_{-n}\}$).  Let $g\in A^\ZZ$ be the element defined in
item (2) of Theorem~\ref{thm:inherent_non-dualizability}. That is, $g(j) =
\pi_j(c_j)$, where $c_j$ is a member of the unique non-singleton block of
$\ker(\pi_j)|_{C_0}$.

\begin{claim}
$g = \alpha$ and $\alpha\not\in C$.
\end{claim}
\begin{claimproof}
We first show that $g = \alpha$. If $j\in [-n,0]$, then $\ker(\pi_j)|_{C_0}$
consists of a single block, and $g(j) = a_j = \alpha(j)$. If $j\not\in [-n,0]$,
then $\ker(\pi_j)|_{C_0}$ consists of two blocks,
\[
  X_j = \{ \alpha_j \} 
  \qquad\qquad \text{and} \qquad\qquad
  Y_j = \{ \alpha_i \mid i\neq j \}
\]
($\pi_j(X_j) = b$ and $\pi_j(Y_j) = a$), and $g(j) = a = \alpha(j)$. Therefore
$g = \alpha$.

We now show that $\alpha\not\in C$. Suppose to the contrary that $\alpha\in
C$. Then there exists a term $t(x_1,\ldots,x_m)$ of $\A$ such that
$t(\alpha_1, \ldots, \alpha_m) = \alpha$ for some $m$. That is,
\[
  t\pmat{ \vdots &        &        & \vdots \\
          a_{-n} & a_{-n} & \cdots & a_{-n} \\
          \vdots &        &        & \vdots \\
          a_0    & a_0    & \cdots & a_0    \\
          b      & a      & \cdots & a      \\
          a      & b      &        & a      \\
          \vdots &        & \ddots & \vdots \\
          a      & a      & \cdots & b      \\
          \vdots &        &        & \vdots }
  = \pmat{ \vdots \\
           a_{-n} \\
           \vdots \\
           a_0    \\
           a      \\
           a      \\
           \vdots \\
           a      \\
           \vdots }.
\]
Since $A = \{a_0,\ldots, a_{-n}\}$, the ``top'' portion of the equality
implies that $t(x_1,\ldots,x_m)$ is idempotent and hence is a term of
$\A_I$. The ``bottom'' portion of the equality then contradicts $a\not\prec
b$ in $\A_I$.
\end{claimproof}

This completes the proof that item (2) from
Theorem~\ref{thm:inherent_non-dualizability} holds. Thus $\A$ is inherently
non-dualizable.
\end{proof}  % }}}

\begin{cor}\label{cor:omits_1_5_dualizable_then_cube}   % {{{
Let $\A$ be a finite algebra such that $\V(\A)$ omits types $\1$ and $\5$.
If $\A$ admits a natural duality, then $\A$ has a cube term.
\end{cor}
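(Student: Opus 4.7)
The plan is to obtain the corollary as an immediate contrapositive of Theorem~\ref{thm:omits_1_5_no_cube_then_non-dualizable}, once one observes that inherent non-dualizability of $\A$ entails ordinary non-dualizability of $\A$. Concretely, I would assume that $\V(\A)$ omits types $\1$ and $\5$ and that $\A$ is dualizable, and then argue by contradiction: suppose $\A$ has no cube term.

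By Theorem~\ref{thm:omits_1_5_no_cube_then_non-dualizable}, the hypothesis that $\V(\A)$ omits $\1$ and $\5$ together with the assumption that $\A$ has no cube term implies that $\A$ is inherently non-dualizable. Recall that this means: for every finite algebra $\m{B}$ with $\A\in \S\P(\m{B})$, the algebra $\m{B}$ is non-dualizable. Applying this with $\m{B} = \A$ — which is legal because $\A$ is a subalgebra of the trivial power $\A^1$, so $\A \in \S\P(\A)$ — we conclude that $\A$ itself is non-dualizable. This directly contradicts the standing assumption that $\A$ admits a natural duality, so $\A$ must in fact have a cube term.

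There is no substantive obstacle here beyond the sanity check that $\A\in \S\P(\A)$, which is immediate from the definition of the operators $\S$ and $\P$; all of the real work has already been carried out in the proof of Theorem~\ref{thm:omits_1_5_no_cube_then_non-dualizable}, whose conclusion of \emph{inherent} (rather than merely plain) non-dualizability is precisely what allows this corollary to be extracted without further effort.
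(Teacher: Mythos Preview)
Your proposal is correct and is exactly the intended argument: the paper states the corollary without proof because it is the immediate contrapositive of Theorem~\ref{thm:omits_1_5_no_cube_then_non-dualizable}, using only the trivial observation that $\A\in\S\P(\A)$ so that inherent non-dualizability implies non-dualizability.
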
  % }}}
%============================================================================}}}

% bibliography and end-matter {{{
\bibliographystyle{amsplain}
\bibliography{dual-implies-cube}
\begin{center}
  \rule{0.61803\textwidth}{0.1ex}   % 1/(golden ratio)
\end{center}
%============================================================================}}}
\end{document}